\theoremstyle{plain}
\newtheorem{theorem}{Theorem}[section]
\newtheorem{proposition}[theorem]{Proposition}
\newtheorem{corollary}[theorem]{Corollary}
\theoremstyle{definition}
\newtheorem{definition}[theorem]{Definition}
\newtheorem{example}[theorem]{Example}
\newtheorem{claim}[theorem]{Claim}
\newcommand{\vs}{v_1,\ldots,v_n}                
\newcommand{\Fs}{F_1,\ldots,F_q}                
\newcommand{\dimn}{ {\rm{dim}} \ }
\newcommand{\rmv}[1]{\setminus \langle #1\rangle}
\newcommand{\D}{\Delta}                         
\newcommand{\lcm}{{\mathop{\rm{lcm}}}}          
\newcommand{\ndiv}{\not |}              
\newcommand{\st}{\ | \ }                        
\newcommand{\tuple}[1]{\langle #1 \rangle}      
\newcommand{\void}[1]{}
\newcommand{\XX}{\mathcal{X}}
\newcommand{\erase}[1]{}
\newcommand{\then}{\Longrightarrow}
\newcommand{\cocoa}{\mbox{\rm C\kern-.13em o\kern-.07 em C\kern-.13em o\kern-.15em A}} 
\newcommand{\cocoax}{\mbox{C\kern-.13em o\kern-.07 em C\kern-.13em o\kern-.15em A}} 
\newcommand{\cocoal}{\mbox{\rm C\kern-.13em o\kern-.07 em C\kern-.13em o\kern-.15emA\kern-.1em L}}
\newcommand{\todo}[1]{\vspace{5 mm}\par \noindent
\marginpar{\textsc{ToDo}}
\framebox{\begin{minipage}[c]{0.95 \textwidth}
\tt #1 \end{minipage}}\vspace{5 mm}\par}
\renewcommand{\todo}[1]{}
\newcommand{\idiot}[1]{\vspace{5 mm}\par \noindent
\framebox{\begin{minipage}[c]{0.95 \textwidth}
\tt #1 \end{minipage}}\vspace{5 mm}\par}
\renewcommand{\idiot}[1]{}
\newcommand{\sm}{\setminus}
\renewcommand{\leq}{\leqslant}          
\renewcommand{\geq}{\geqslant}  
\date{}
\author{Sara Faridi\thanks{Department of
Mathematics and Statistics, Dalhousie University, Halifax, Canada, 
faridi@mathstat.dal.ca. 
Research supported by NSERC.}}
\title{\Large \sc Monomial Resolutions Supported By Simplicial Trees}
\begin{document}

\maketitle
\begin{abstract} We explore resolutions of monomial ideals supported by 
simplicial trees. We argue that since simplicial trees are acyclic,
the criterion of Bayer, Peeva and Sturmfels for checking if a
simplicial complex supports a free resolution of a monomial ideal
reduces to checking that certain induced subcomplexes are connected.
We then use results of Peeva and Velasco to show that every simplicial
tree appears as the Scarf complex of a monomial ideal, and hence
supports a minimal resolution. We also provide a way to construct
smaller Scarf ideals than those constructed by Peeva and Velasco.
 \end{abstract}

\section{Introduction} 

The purpose of this paper is to demonstrate that simplicial trees have
the potential to be used as an effective tool in resolutions of
monomial ideals. As first noted by Diane Taylor~\cite{T}, given an
ideal $I$ in a polynomial ring $R$ minimally generated by monomials
$m_1,\ldots,m_q$, a free resolution of $I$ can be given by the
simplicial chain complex of a simplex with $q$ vertices. Most often
Taylor's resolution is not minimal. Bayer, Peeva and
Sturmfels~\cite{BPS} refined Taylor's construction: they provided a
criterion to check if the simplicial chain complex of any simplicial
complex on $q$ vertices is a (minimal) free resolution of $I$
(Theorem~\ref{t:BPS}).

If $\D$ is a simplicial complex with $q$ vertices, the criterion of
Bayer, Peeva and Sturmfels determines if $\D$ supports a free
resolution of $I$ based on whether certain subcomplexes of $\D$ are
acyclic.  The goal of this note is to point out that if the simplicial
complex $\D$ being considered is a simplicial
tree~(Definition~\ref{d:tree}), then all that needs to be checked is
that these subcomplexes are connected. We accomplish this by proving
that simplicial trees are acyclic~(Theorem~\ref{t:collapsible}), and
every induced subcomplex of a simplicial tree is a simplicial
forest~(Theorem~\ref{t:induced}).

 We then use a result of Peeva and Velasco~\cite{PV} to conclude that
 every simplicial tree supports a minimal resolution of a monomial
 ideal. Peeva and Velasco's result is that every simplicial complex
 (other than the boundary of a simplex) is the Scarf complex of some
 monomial ideal, and they give a specific method to build such an
 ideal. We refine their result to describe ideals minimally resolved
 by a Scarf complex, and therefore by a given simplicial tree.

\section{Simplicial trees and some of their properties}

 \begin{definition}[simplicial complex] 
   A \emph{simplicial complex} $\D$ over a set of vertices $V=\{ \vs
   \}$ is a collection of subsets of $V$, with the property that $\{
   v_i \} \in \D$ for all $i$, and if $F \in \D$ then all subsets of
   $F$ are also in $\D$. An element of $\D$ is called a \emph{face} of
   $\D$, and the \emph{dimension} of a face $F$ of $\D$ is defined as
   $|F| -1$, where $|F|$ is the number of vertices of $F$.  The faces
   of dimensions 0 and 1 are called \emph{vertices} and \emph{edges},
   respectively, and $\dimn \emptyset =-1$.  The maximal faces of $\D$
   under inclusion are called \emph{facets} of $\D$. The dimension of
   the simplicial complex $\D$ is the maximal dimension of its facets.
   A subcollection of $\D$ is a simplicial complex whose facets are
   also facets of $\D$; in other words a simplicial complex generated
   by a subset of the set of facets of $\D$.
 \end{definition}

  Suppose $\D$ is a simplicial complex with facets $\Fs$. The
  simplicial complex obtained by \emph{removing the facet} $F_i$ from
  $\D$ is the simplicial complex
 $$\D \rmv{F_i}=\tuple{F_1,\ldots,\hat{F}_{i},\ldots,F_q}.$$ 
 
\begin{definition}[\cite{F} leaf, joint] A facet $F$ of a 
simplicial complex is called a \emph{leaf} if either $F$ is the only
facet of $\D$, or for some facet $G \in \D \rmv{F}$ we have
  $$F \cap (\D \rmv{F}) \subseteq G.$$
Equivalently, a facet $F$ is a leaf of $\D$ if $F \cap (\D \rmv{F})$
is a face of $\D \rmv{F}$.
\end{definition}

Note that it follows immediately from the definition above that a leaf
$F$ must contain at least one \emph{free vertex}; namely a vertex that
belongs to no other facet of $\D$ but $F$.

\begin{definition}[\cite{F} tree, forest]\label{d:tree} A connected 
simplicial complex $\D$ is a \emph{tree} if every nonempty
subcollection of $\D$ has a leaf.  If $\D$ is not necessarily
connected, but every subcollection has a leaf, then $\D$ is called a
\emph{forest}.
\end{definition}


\begin{definition}[induced subcomplex]\label{d:induced} Suppose $\D$ 
is a simplicial complex over a vertex set $V$ and let $\XX \subseteq
V$. The \emph{induced subcomplex on $\XX$}, denoted by $\D_\XX$, is
defined as
$$\D_\XX=\{F\in \D \st F\subseteq \XX \}.$$
\end{definition}

\begin{theorem}\label{t:induced} An induced subcomplex of a simplicial 
tree is a simplicial forest.\end{theorem}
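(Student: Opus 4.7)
The plan is to show that every subcollection $\Sigma$ of $\Delta_\XX$ has a leaf. Write $\Sigma=\langle G_1,\ldots,G_r\rangle$, where the $G_i$ are the distinct facets of $\Sigma$ (hence facets of $\Delta_\XX$). The key idea is to \emph{lift} $\Sigma$ to a subcollection of $\Delta$, apply the tree property there, and then push the resulting leaf back down to $\Sigma$ by intersecting with $\XX$.

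First I would establish the lifting. Since $G_i\in\Delta$, there exists a facet $F_i$ of $\Delta$ with $G_i\subseteq F_i$. Then $F_i\cap\XX$ is a face of $\Delta$ contained in $\XX$, so it belongs to $\Delta_\XX$ and contains $G_i$; the maximality of $G_i$ as a facet of $\Delta_\XX$ forces $G_i=F_i\cap\XX$. I would then observe that the chosen $F_1,\ldots,F_r$ are pairwise distinct: if $F_i\subseteq F_j$ with $i\ne j$, intersecting with $\XX$ would give $G_i\subseteq G_j$, contradicting that $G_i,G_j$ are distinct facets of $\Sigma$.

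Next, I would form $\Sigma'=\langle F_1,\ldots,F_r\rangle$ as a subcollection of $\Delta$. Since $\Delta$ is a tree, $\Sigma'$ has a leaf, which after relabeling we may assume to be $F_1$. The case $r=1$ is immediate ($G_1$ is its own leaf), so assume $r>1$; then there is some $j\ne 1$ with $F_1\cap F_i\subseteq F_j$ for every $i\ne 1$. Intersecting with $\XX$ translates this directly into the leaf condition in $\Sigma$:
$$G_1\cap G_i=(F_1\cap\XX)\cap(F_i\cap\XX)=F_1\cap F_i\cap\XX\subseteq F_j\cap\XX=G_j,$$
for every $i\ne 1$. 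Hence $G_1\cap(\Sigma\rmv{G_1})\subseteq G_j$ with $G_j$ a facet of $\Sigma$ different from $G_1$, so $G_1$ is a leaf of $\Sigma$. Since $\Sigma$ was an arbitrary subcollection, $\Delta_\XX$ is a simplicial forest.

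The only real subtlety is the identification $G_i=F_i\cap\XX$ and the resulting distinctness of the $F_i$; once that bookkeeping is in place, the leaf property transports cleanly between $\Sigma'$ and $\Sigma$.
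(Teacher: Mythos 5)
Your proof is correct and follows essentially the same route as the paper's: lift the subcollection of $\D_\XX$ to a subcollection of $\D$, take a leaf there using the tree property, and intersect with $\XX$ to obtain a leaf downstairs. The only difference is that you spell out the identification $G_i=F_i\cap\XX$ and the distinctness of the lifts, which the paper dismisses as ``clearly''; this is sound bookkeeping but not a different argument.
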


  \begin{proof} Let $\D=\langle F_1,\ldots, F_q\rangle$ be a simplicial
    tree and suppose $\XX=\{x_1,\ldots,x_s\}$ is a subset of the
    vertex set of $\D$. We would like to show that $\D_\XX$ is a
    forest. The facets of $\D_\XX$ are clearly a subset of $\{F_1\cap
    \XX,\ldots,F_q\cap \XX\}$. Let $\Gamma$ be a subcollection of
    $\D_\XX$ consisting of facets $F_{\alpha_1}\cap \XX,\ldots,
    F_{\alpha_r}\cap \XX$.  We need to show $\Gamma$ has a leaf. Since
    $\D$ is a tree, the corresponding subcollection $F_{\alpha_1},
    \ldots, F_{\alpha_r}$ of $\D$ has a leaf $F_{\alpha_i}$ with joint
    $F_{\alpha_j}$. So for every $h \in \{1,\ldots,r\}\sm \{i\}$ we
      have $$F_{\alpha_i}\cap F_{\alpha_h} \subseteq F_{\alpha_j}$$ which implies that
   $$(F_{\alpha_i}\cap \XX) \cap (F_{\alpha_h}\cap \XX) \subseteq
      (F_{\alpha_j}\cap \XX).$$ So $F_{\alpha_i}\cap \XX$ is a leaf of
      $\Gamma$ and therefore $\D_\XX$ is a forest.
  \end{proof}

One property of simplicial trees that we will need is that they are
acyclic. While this can be shown via a direct calculation of
homological cycles and boundaries, we show more: simplicial trees are
collapsible, hence contractible, and therefore acyclic. We refer the
reader to~\cite{B} for more details on these concepts.

\begin{definition}[collapsible simplicial complex] \label{d:collapsible}
Let $\D$ be a simplicial complex and $F'$ be a maximal proper face of
exactly one facet $F$ of $\D$. The complex $\Gamma=\D \sm \{F,F'\}$ is
said to be obtained from $\D$ using an \emph{elementary collapse}. If a
sequence of elementary collapses reduces $\D$ to a single point, then
$\D$ is called \emph{collapsible}.
\end{definition}

Below we use the phrase ``$\D$ collapses to $\D'$'' to imply that the complex 
$\D'$ can be obtained from $\D$ via a sequence of elementary collapses.

\begin{proposition}\label{p:simplex} Let $\D$ be a simplex with facet $F$ and 
let $F'$ be a proper nonempty face of $F$. Then $\D$ collapses to
$\tuple{F'}$. In particular, every simplex is collapsible.
\end{proposition}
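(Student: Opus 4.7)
The plan is to induct on $|F|$. The base case is $|F|=2$, say $F=\{u,v\}$ with $F'=\{u\}$; here $\{v\}$ is a maximal proper face of the unique facet $F$, and one elementary collapse removing the pair $\{F,\{v\}\}$ produces $\tuple{F'}$.

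For the inductive step I would first establish the sub-lemma that for any $v \in F$ with $|F|\geq 2$, the simplex $\tuple{F}$ collapses to the subsimplex $\tuple{F\sm\{v\}}$. Granted this, the proposition follows by choosing $v\in F\sm F'$: if $F'=F\sm\{v\}$ we are done, and otherwise the induction hypothesis applied to $\tuple{F\sm\{v\}}$ (in which $F'$ remains a proper nonempty face) yields a collapsing sequence from $\tuple{F\sm\{v\}}$ to $\tuple{F'}$, to be concatenated with the one produced by the sub-lemma.

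The bulk of the work is the sub-lemma. Fixing any $u\in F\sm\{v\}$, I partition the $2^{|F|-1}$ faces that contain $v$ into pairs $(A_S,B_S)=(S\cup\{u,v\},\,S\cup\{v\})$ indexed by $S\subseteq F\sm\{u,v\}$. The plan is to remove these pairs by elementary collapses, taken in any order of non-increasing $|A_S|$; once every pair has been removed, the surviving faces are precisely the subsets of $F\sm\{v\}$, yielding $\tuple{F\sm\{v\}}$.

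The main obstacle is checking, at each stage $S$, that the intended removal is a legitimate elementary collapse, i.e.\ that $A_S$ is currently a facet of the complex and that $B_S$ lies in no other facet. This is a direct bookkeeping check once the ordering is fixed: any face of $\D$ strictly containing $A_S$ has the form $A_{S'}$ for some $S'\supsetneq S$, and any face strictly containing $B_S$ either also contains $u$ (and so strictly contains $A_S$, reducing to the first case) or has the form $B_{S'}$ for some $S'\supsetneq S$; in every case the interfering face was already removed at an earlier stage because $|A_{S'}|>|A_S|$.
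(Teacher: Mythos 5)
Your proof is correct, and its overall architecture coincides with the paper's: induct on $|F|$, first collapse the full simplex $\tuple{F}$ onto a codimension-one subsimplex $\tuple{F\sm\{v\}}$ chosen to still contain $F'$, then invoke the induction hypothesis. The place where you genuinely diverge is in how that core collapse is carried out and verified. The paper fixes the boundary facets $F_i=F\sm\{x_i\}$ and, via a nested claim, peels them off one at a time, writing out an explicit chain of collapses on the intersections $F_{i,j,\ldots}$ and tracking at each step which maximal proper faces remain free. You instead describe the whole collapsing sequence at once as a perfect matching on the faces containing $v$ --- pairing $\sigma$ with $\sigma\cup\{u\}$ for a fixed $u\neq v$ --- and observe that processing the pairs in non-increasing order of $|A_S|$ makes every removal a legal elementary collapse, since any face interfering with the freeness of $B_S$ is some $A_{S'}$ or $B_{S'}$ with $S'\supsetneq S$ and hence has already been deleted. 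The two arguments remove exactly the same pairs of faces (the paper's sequence is also, in effect, the matching $\sigma\leftrightarrow\sigma\,\triangle\,\{x_1\}$ on faces containing $x_n$), so the content is the same; what your version buys is a uniform description of the matching and a one-line ordering criterion in place of the paper's case-by-case bookkeeping, at the cost of having to argue slightly more abstractly that the ordering keeps each step legal. Both are complete proofs.
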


    \begin{proof} Suppose $F=\{x_1,\ldots,x_n\}$. We use induction on $n$. 
      The case $n=2$ is clear, since $F'$ would be a point, say
      $\{x_1\}$, and the edge $\{x_1,x_2\}$ clearly collapses to
      $\{x_1\}$.
   
      Suppose $n>2$ and let $F_1,\ldots, F_n$ be the maximal proper
      faces of $F$ where for each $i$, $F_i=F\sm \{x_i\}$. Suppose,
      without loss of generality, $F'\subset F_n$. We perform the
      following elementary collapse on $\D$: $$\D\sm \{F,
      F_1\}=\langle F_2,\ldots,F_n\rangle.$$
      
      \begin{claim}\label{c:simplex} For $i\geq 2$ there is a series of
      elementary collapses taking the complex $\langle F_i,\ldots, F_n
      \rangle$ to the complex $\langle F_{i+1},\ldots, F_n \rangle$.
      \end{claim}

      \begin{proof}[Proof of Claim~\ref{c:simplex}] If $i=2$, then 
       the complex $\D_2=\langle F_2,\ldots,F_n\rangle$ has $F_2\cap
       F_1$ as a maximal proper face of $F_2$ (note that $F_2\cap F_1
       =\{x_3,\ldots,x_n\} \not \subset F_i$ if $i>2$). Now we do the
       elementary collapse
        $$\D_2\sm \{F_2,F_1\cap F_2\}=\langle
       F_3,\ldots,F_n\rangle$$ and we are done.

       Now suppose that we have arrived at $\D_i=\langle F_i,\ldots,
       F_n \rangle$. In what follows we will repeatedly use two basic 
       observations. 
       \begin{enumerate}
       \item[(1)] The maximal proper subfaces of a face
         $F_{i_1,\ldots,i_k}=F_{i_1}\cap F_{i_2}\cap \ldots \cap
         F_{i_k}$ are of the form
        $$F_{i_1,\ldots,i_k,j}=F_{i_1}\cap F_{i_2}\cap \ldots 
        \cap F_{i_k} \cap F_j \mbox{
         where }j \notin \{i_1,\ldots,i_k\} \subseteq
         \{1,\ldots,n\}.$$

       \item[(2)]  Suppose $n\geq i_1>i_2>\cdots>i_s\geq 1$ and 
       $n\geq j_1>j_2>\cdots>j_t\geq 1$.  Then we have 
       $$F_ {i_1,\ldots,i_s} \subseteq F_ {j_1,\ldots,j_t} \iff
         \{i_1,\ldots,i_s\} \supseteq \{j_1,\ldots,j_t\}.$$
       \end{enumerate}

       So the maximal proper faces of $F_i$ that are not contained in
       any of $F_{i+1},\ldots, F_n$
       are $$F_{1,i},F_{2,i},\ldots,F_{i-1,i}.$$ Let $\D_{i+1}=\langle
       F_{i+1},\ldots, F_n \rangle$. Using~(1) and~(2) above we perform
       the repeated elementary collapses 
        $$\begin{array}{rl}
        \D_{i,1}=&\D_i \sm \{F_i,F_{i,1}\}
          = \langle F_{i,2},\ldots,F_{i,i-1}\rangle \cup \D_{i+1}\\
          &\\
         \D_{i,2,1}=&\D_{i,1} \sm \{F_{i,2}, F_{i,2,1}\}
          = \langle
         F_{i,3},\ldots,F_{i,i-1}\rangle \cup \D_{i+1}\\
         &\\
         \D_{i,3,1}=&\D_{i,2,1} \sm \{F_{i,3}, F_{i,3,1}\}
         =\langle F_{i,3,2}, F_{i,4},
         \ldots,F_{i,i-1}\rangle \cup \D_{i+1}\\
         &\\     
         \D_{i,3,2,1}=&\D_{i,3,1} \sm \{F_{i,3,2}, F_{i,3,2,1}\}
         = \langle F_{i,4},\ldots,F_{i,i-1}  \rangle \cup \D_{i+1}\\
         &\\
         \D_{i,4,1}=&\D_{i,3,2,1} \sm \{F_{i,4}, F_{i,4,1}\}
         = \langle F_{i,4,2}, F_{i,4,3},
         F_{i,5},\ldots,F_{i,i-1}\rangle \cup \D_{i+1}\\
         &\\
         \vdots\ \ \ \ \ \ \ &\\ 
         \D_{i,\ldots,1}=&\D_{i,i-1,i-2,\ldots,3,1}\sm \{F_{i,\dots,2},F_{i,\dots,1}\}
         = \D_{i+1} 

       \end{array}$$
        
        \end{proof}

       It now follows from repeated applications of Claim~\ref{c:simplex}
       that $\D$ collapses to $\D_n=\langle F_n \rangle$, which is a
       simplex on $n-1$ vertices. If $F'=F_n$, we are done, and if
       not the induction hypothesis implies that $\D_n$ collapses to
       $\langle F' \rangle$ via a series of elementary collapses.
    \end{proof}

\begin{theorem}\label{t:collapsible} Simplicial trees are collapsible, and
 therefore contractible and acyclic.
\end{theorem}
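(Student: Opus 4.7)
The plan is to argue by induction on the number $q$ of facets of the tree $\D$. The base case $q = 1$ is immediate: a tree with one facet is a simplex, which is collapsible by Proposition~\ref{p:simplex}.

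For the inductive step, suppose $\D$ is a tree with $q \geq 2$ facets. Since $\D$ is itself a subcollection, it has a leaf $F$ with some joint $G \in \D \rmv{F}$. Set $F' = F \cap G$. A free vertex of $F$ lies outside $G$, so $F' \subsetneq F$; and since $\D$ is connected with $q \geq 2$ facets, $F$ must share a vertex with some other facet, and by the leaf property this vertex lies in $G$, so $F' \neq \emptyset$. Hence $F'$ is a proper nonempty face of $F$, and Proposition~\ref{p:simplex} supplies a sequence of elementary collapses taking the simplex $\tuple{F}$ down to $\tuple{F'}$.

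The crux of the argument is to show that these elementary collapses of $\tuple{F}$ can be realized as elementary collapses of the ambient complex $\D$. The key observation is that any face $H \subseteq F$ with $H \not\subseteq F'$ cannot lie in any other facet $F_i$ of $\D$: if it did, then $H \subseteq F \cap F_i \subseteq G$, forcing $H \subseteq F \cap G = F'$, a contradiction. Inspection of the proof of Proposition~\ref{p:simplex} shows that every face removed during the collapse of $\tuple{F}$ to $\tuple{F'}$ fails to be contained in $F'$ (otherwise it would still be present at the end). By the observation above, such faces lie in no facet of $\D$ other than $F$ itself, so at each stage the lower face of the collapsing pair is a maximal proper face of exactly one facet of the current modified complex, satisfying Definition~\ref{d:collapsible}. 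After all these collapses the surviving complex is $\D \rmv{F}$, since the residue $\tuple{F'}$ is absorbed into $\tuple{G} \subseteq \D \rmv{F}$.

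It remains to apply the inductive hypothesis to $\D \rmv{F}$, which requires that it be a tree with $q - 1$ facets. As a subcollection of a tree, it is automatically a forest. For connectedness, note that every facet of $\D$ sharing a vertex with $F$ shares that vertex with $G$ by the leaf property, so any path in the facet-adjacency graph of $\D$ that visits $F$ can be rerouted through $G$. Hence $\D \rmv{F}$ is a connected forest, i.e., a tree with $q-1$ facets, and the inductive hypothesis completes the collapse of $\D$ to a point; contractibility and acyclicity follow from collapsibility by the standard facts cited from~\cite{B}. The main technical obstacle, I expect, lies in the third paragraph: careful bookkeeping is needed to verify that at each intermediate stage the face being removed is a maximal proper face of exactly one facet of the \emph{modified ambient complex}, as a priori Proposition~\ref{p:simplex} only guarantees this inside the modified simplex $\tuple{F}$.
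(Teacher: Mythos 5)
Your proposal is correct and follows essentially the same route as the paper: induction on the number of facets, using Proposition~\ref{p:simplex} to collapse the leaf $\tuple{F}$ onto $F'=F\cap G$, observing that the removed faces lie in no other facet (since $F\cap(\D\rmv{F})\subseteq G$), and then applying the inductive hypothesis to the tree $\D\rmv{F}$. The extra details you supply --- that $F'$ is a proper nonempty face of $F$ and that $\D\rmv{F}$ remains connected --- are points the paper leaves implicit, and your bookkeeping in the third paragraph resolves exactly the issue the paper dispatches in one sentence.
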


   \begin{proof} We prove this by induction on the number $q$ of facets
    of a simplicial tree $\D$. If $q=1$ the statement follows from
    Proposition~\ref{p:simplex}. Suppose $q>1$ and let $F$ be a leaf
    of $\D$ with joint $G$. Let $F'=F\cap G$. We know by
    Proposition~\ref{p:simplex} that $\langle F \rangle$ reduces to
    $\tuple{F'}$ via a series of elementary collapses. Moreover, the
    faces being eliminated in in each of the collapses are not faces
    of $\D\rmv{F}$, since they are not faces of $F'=F\cap
    \D\rmv{F}$. Therefore, all the elementary collapses that reduce
    $\tuple{F}$ to $\tuple{F'}$ are elementary collapses on $\D$ that
    reduce $\D$ to $\D\rmv{F}$. The latter is a tree with $q-1$
    facets, and hence collapsible by the induction hypothesis.

    All collapsible complexes are contractible so the rest of the statement 
    follows directly.
   \end{proof}

\section{Resolutions by trees} 

We now review monomial resolutions as described by Bayer, Peeva and
Sturmfels~\cite{BPS} and show how simplicial trees fit in that
picture. The construction in~\cite{BPS} considers a monomial ideal $I$
in a polynomial ring $S$ over a field, where $I$ is minimally
generated by monomials $m_1,\ldots,m_t$. If $\D$ is a simplicial
complex on $t$ vertices, once can label each vertex of $\D$ with one
of the generators of $m_1,\ldots,m_t$ and each face with the least
common multiple of the labels of its vertices. If $m$ is a monomial in
$S$, let $\D_m$ be the subcomplex of $\D$ induced on the vertices of
$\D$ whose labels divide $m$.

\begin{theorem}[Bayer, Peeva, Sturmfels~\cite{BPS}]\label{t:BPS} Let $\D$ be a 
simplicial complex labeled by monomials $m_1,\ldots,m_t \in S$, and
let $I = (m_1,\ldots,m_t)$ be the ideal in $S$ generated by the vertex
labels. The chain complex ${\mathcal{C}}(\D) = {\mathcal{C}}(\D ; S)$
of $\D$ is a free resolution of $S/I$ if and only if the induced
subcomplex $\D_m$ is empty or acyclic for every monomial $m \in
S$. Moreover, ${\mathcal{C}}(\D)$ is a minimal free resolution if and
only if $m_A \neq m_{A'}$ for every proper subface $A'$ of a face $A$.
\end{theorem}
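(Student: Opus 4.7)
The plan is to exploit the $\mathbb{Z}^n$-grading on $\mathcal{C}(\Delta)$ induced by the monomial labels and reduce exactness of the complex to a statement about reduced simplicial homology of the subcomplexes $\Delta_m$. Concretely, for each face $A\in\Delta$ with label $m_A$, the corresponding summand of $\mathcal{C}(\Delta)$ is a free module $S(-\mathbf{a}_A)$ of rank one, where $\mathbf{a}_A$ is the exponent vector of $m_A$. The differential sends the basis element $e_A$ to $\sum_{v\in A} \pm (m_A/m_{A\setminus v})\, e_{A\setminus v}$, and the augmentation $e_{\{v_i\}}\mapsto m_i$ yields a $\mathbb{Z}^n$-graded complex of free $S$-modules with cokernel $S/I$.

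First, I would use that exactness of a $\mathbb{Z}^n$-graded complex of free $S$-modules can be tested one multidegree at a time. Fix a monomial $m=x^{\mathbf{a}}$; then the free summand $S(-\mathbf{a}_A)$ contributes a one-dimensional $k$-piece in multidegree $\mathbf{a}$ precisely when $m_A\mid m$, i.e., when $A\in\Delta_m$. Unwinding the differential (keeping track of the signs coming from an ordering of the vertices) shows that the multidegree-$\mathbf{a}$ strand of the augmented complex is, up to a shift of homological indices, the augmented simplicial chain complex of $\Delta_m$ with coefficients in $k$. Consequently, the augmented complex is exact in all positive homological degrees in multidegree $\mathbf{a}$ if and only if $\widetilde H_i(\Delta_m;k)=0$ for every $i$; exactness at the bottom amounts to $\Delta_m$ being nonempty whenever $m\in I$. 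Together this is precisely the condition that each $\Delta_m$ is empty or acyclic.

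For minimality, I would invoke the standard criterion that a graded free resolution over $S$ is minimal if and only if every matrix entry of every differential lies in the maximal ideal $\mathfrak{m}=(x_1,\ldots,x_n)$. The entry of $\partial$ associated with a face $A$ and a codimension-one subface $A\setminus v$ is the monomial $m_A/m_{A\setminus v}$, which is a unit exactly when $m_A=m_{A\setminus v}$. Thus minimality is equivalent to $m_A\neq m_{A'}$ for every codimension-one subface $A'\subsetneq A$. Since $\mathrm{lcm}$ is monotonic under face inclusion, any equality $m_A=m_{A'}$ with $A'\subsetneq A$ forces equality across some codimension-one step in a chain $A\supsetneq A_1\supsetneq\cdots\supsetneq A'$, so the statement for all proper subfaces is equivalent to the statement for codimension-one subfaces.

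The main obstacle, and really the only nonformal step, is the careful identification of the multidegree-$\mathbf{a}$ strand of $\mathcal{C}(\Delta)$ with the augmented reduced chain complex of $\Delta_m$: one needs to check that the sign conventions agree and that the augmentation to $S/I$ specializes correctly to the augmentation $C_0(\Delta_m;k)\to k$. Once that identification is in hand, both parts of the theorem follow from standard facts about $\mathbb{Z}^n$-graded complexes of free $S$-modules.
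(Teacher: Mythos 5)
The paper states this result as a citation of Bayer--Peeva--Sturmfels and gives no proof, so there is nothing internal to compare against; your argument is the standard one from the original source (and from Miller--Sturmfels' treatment), and it is correct: checking exactness one multidegree at a time, identifying the degree-$\mathbf{a}$ strand with the augmented chain complex of $\Delta_m$ over $k$, and reading off minimality from whether the entries $m_A/m_{A'}$ are units. Your closing remark reducing the ``all proper subfaces'' condition to codimension-one subfaces via monotonicity of the lcm is also right and is the one small point worth making explicit.
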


Note that we can determine whether ${\mathcal{C}}(\D)$ is a resolution
just by checking the vanishing condition for monomials that are least
common multiples of sets of vertex labels.

Combinatorially what Theorem~\ref{t:BPS} is saying that the Betti
vector of $S/I$ is bounded by the $f$-vector of an eligible $\D$:
\begin{eqnarray}\label{e:bettif}
\beta(S/I)=(\beta_0(S/I),\ldots,\beta_q(S/I))\leq
(f_0(\D),\ldots,f_q(\D))=\mathbf{f}(\D).
\end{eqnarray}
with equality holding if some extra conditions are satisfied.

 We now turn our attention back to simplicial trees. If the $\D$ under
consideration in Theorem~\ref{t:BPS} is a tree, then we can show the following.

\begin{theorem}[Resolutions via simplicial trees]\label{t:main} 
Let $\D$ be a simplicial tree labeled by monomials $m_1,\ldots,m_t \in
S$, and let $I = (m_1,\ldots,m_t)$ be the ideal in $S$ generated by
the vertex labels. The chain complex ${\mathcal{C}}(\D) = {\mathcal{C}}(\D ;
S)$ is a free resolution of $S/I$ if and only if the induced
subcomplex $\D_m$ is connected for every monomial $m$.
\end{theorem}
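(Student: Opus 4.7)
The plan is to combine the Bayer--Peeva--Sturmfels criterion (Theorem~\ref{t:BPS}) with the two structural facts about simplicial trees established in Section~2, namely Theorem~\ref{t:induced} (induced subcomplexes of trees are forests) and Theorem~\ref{t:collapsible} (trees are acyclic). By Theorem~\ref{t:BPS}, it suffices to show that, when $\D$ is a simplicial tree, the condition ``$\D_m$ is empty or acyclic'' is equivalent to the condition ``$\D_m$ is connected'' for every monomial $m \in S$.

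First I would fix a monomial $m$ and invoke Theorem~\ref{t:induced}: since $\D_m$ is by construction an induced subcomplex of the tree $\D$, it is a simplicial forest. Next I would observe that a forest decomposes into a disjoint union of simplicial trees (its connected components in the sense of Definition~\ref{d:tree}), each of which is itself a simplicial tree. By Theorem~\ref{t:collapsible}, each such component is contractible and hence acyclic, so its reduced homology vanishes in every degree.

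From this the reduced homology of the whole complex $\D_m$ is concentrated in degree $0$ and has rank $c-1$, where $c$ is the number of connected components of $\D_m$. Therefore a nonempty $\D_m$ is acyclic (in the sense of vanishing reduced homology required by Theorem~\ref{t:BPS}) if and only if $c = 1$, i.e.\ $\D_m$ is connected. Plugging this equivalence back into the BPS criterion yields the theorem.

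The only mild subtlety is what happens when $\D_m$ is empty; this is handled by noting that the BPS criterion needs only be checked at monomials $m$ that are least common multiples of vertex labels (as pointed out in the remark following Theorem~\ref{t:BPS}), and for such $m$ the complex $\D_m$ is automatically nonempty, so the reduction to connectedness is unambiguous. No deep obstacle appears: the entire argument is really a two-line consequence of the results already developed in Section~2, and the main work has been done in proving Theorems~\ref{t:induced} and~\ref{t:collapsible}.
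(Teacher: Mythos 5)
Your proposal is correct and follows essentially the same route as the paper: apply the Bayer--Peeva--Sturmfels criterion, use Theorem~\ref{t:induced} to see that each $\D_m$ is a forest, and use Theorem~\ref{t:collapsible} to conclude its reduced homology can only live in degree $0$. In fact you spell out the details the paper leaves implicit (the decomposition of a forest into contractible tree components and the treatment of empty $\D_m$), which only strengthens the argument.
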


   \begin{proof} By Theorem~\ref{t:induced} every induced subcomplex of
    $\D$ is a forest. By Theorem~\ref{t:collapsible} forests are
     acyclic in all but possibly the $0$-th reduced homology, that is
     they may not be connected. This proves the theorem.
   \end{proof}

The strength of Theorem~\ref{t:main} is in that it reduces the
question of whether a simplicial complex resolves an ideal to checking
whether some of its induced subcomplexes are connected.

One type of question one could then ask is given a tree $\D$, what
ideals could it resolve? Our first example displays this line of
questioning.

\begin{example}\label{e:gen} Let $\D$ be the simplicial tree 
below on $4$ vertices, which we have labeled with monomials
$m_1,\ldots,m_4$.
\[\includegraphics[width=2in, height=1in]{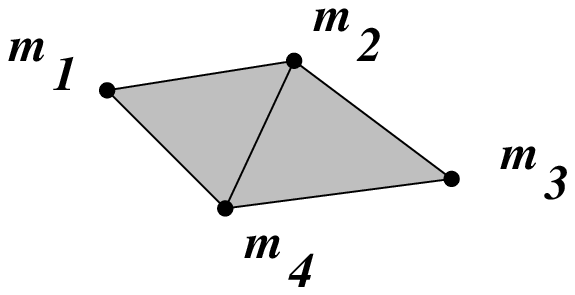}\]

The only induced subcomplex of $\D$ that is not connected is the one induced
on the vertices labeled $m_1$ and $m_3$, so by Theorem~\ref{t:main}
for $I=(m_1,m_2,m_3,m_4)$ to be resolved by $\D$ we need to have
$$m_2 | \lcm(m_1,m_3) \mbox{ \ \ \  or \ \ \ } m_4 | \lcm(m_1,m_3).$$
\end{example}

A more concrete example using the same complex comes next.

\begin{example}\label{e:scarf} The ideal $I=(xy^2,yz,xz^2,zu)$ can be
resolved by $\D$. 
\[\includegraphics[width=2in, height=.8in]{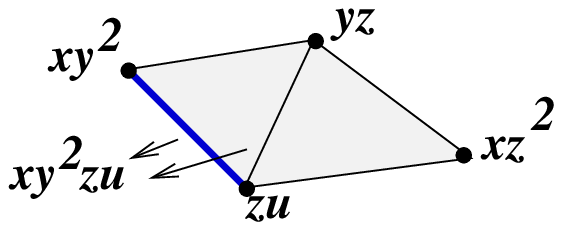}\]
However $\beta(S/I)=(4,4,1)\lneq (4,5,2)=\mathbf{f}(\D)$ so the
  resolution is not minimal.  We try to make it minimal by removing
  the faces with equal labels.
\[\includegraphics[width=2in, height=.8in]{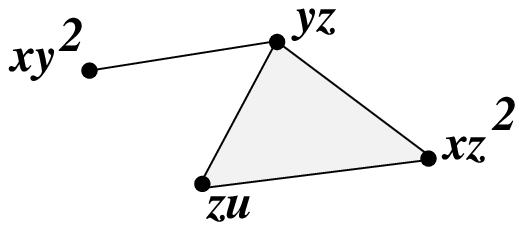}\]
Note that the resulting complex is also a simplicial tree satisfying
the conditions of Theorem~\ref{t:main} and whose $f$-vector is 
$(4,4,1)$. It therefore minimally resolves $S/I$.

\end{example}

\section{Scarf complexes and Scarf ideals}

 We now come to the question of which monomial ideals can be
 (minimally) resolved by a simplicial tree. It is known from work of
 Velasco~\cite{V} that there are classes of monomial ideals whose
 resolutions are not supported by any simplicial complex. However, most
 simplicial complexes, and all simplicial trees do appear as
 \emph{Scarf complexes} of some monomial ideal.  Given a monomial
 ideal its Scarf complex is a subcomplex of its Taylor complex with
 the same labeling and with the added condition that if a face has the
 same label as another face, neither face can appear in the Scarf
 complex. The last simplicial complex appearing in
 Example~\ref{e:scarf} is a Scarf complex of the ideal $I$ in that
 example.

  By construction, if the Scarf complex resolves an ideal, it does so
  minimally.  Moreover most simplicial complexes appear as the Scarf
  complex of some monomial ideal.

\begin{theorem}[\cite{PV}, \cite{Ph}] Let $\D$ be a 
simplicial complex on $r$ vertices.
\begin{enumerate}
\item $\D$ is the Scarf complex of a monomial ideal if and only if
  $\D$ is not the boundary of a simplex on $r$ vertices.
\item $\D$ minimally resolves a monomial ideal if and only if $\D$ is
  acyclic.
\end{enumerate}
\end{theorem}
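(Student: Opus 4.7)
My plan is to treat the two parts separately, with part (2) largely reducing to part (1) together with Theorem~\ref{t:BPS}. The real work lies in part (1), and especially in its reverse implication, which demands an explicit construction of a monomial ideal realizing a given $\D$ as its Scarf complex.

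For the forward direction of (1), I would argue directly. Suppose $\D = \partial \Delta^{r-1}$ were the Scarf complex of some ideal $I=(m_1,\ldots,m_r)$. Every proper nonempty subset $A \subsetneq V$ of the vertex set would then be a face of $\D$, and so its label $\lcm\{m_i : i \in A\}$ would be distinct from that of every other subset of $V$. But $V$ itself is not a face of $\D$, so its label must coincide with the label of some other subset; the only candidates are proper subsets of $V$, whose labels we have just declared to be distinct from $\lcm(m_1,\ldots,m_r)$. This contradiction rules out $\partial \Delta^{r-1}$ as a Scarf complex.

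For the reverse direction of (1), I would follow the Peeva--Velasco strategy: introduce a family of auxiliary variables indexed so that, for each minimal nonface of $\D$, a dedicated variable forces a label collision that removes that nonface from the Scarf complex, while the remaining variables separate faces of $\D$ so that each face's label is unique among all subsets of vertices. The main obstacle is bookkeeping: one has to verify that no unintended collisions occur between labels of faces of $\D$, and that every nonface really does collide with at least one face of $\D$. The hypothesis that $\D$ is not the boundary of a simplex is exactly what makes such a construction possible; without it, the forward direction above rules out any consistent labeling.

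For part (2), the forward direction is immediate from Theorem~\ref{t:BPS}: if $\D$ minimally resolves $S/I$, then $\D = \D_M$ (where $M$ is the lcm of all generators) must be acyclic. For the converse, observe that the boundary of a simplex on $r \geq 2$ vertices is homotopy equivalent to $S^{r-2}$ and hence is not acyclic; so acyclicity of $\D$ forces $\D \neq \partial \Delta^{r-1}$, and by part (1) there is a monomial ideal $I$ with Scarf complex $\D$. I would then invoke the standard fact, which is built into the Peeva--Velasco construction, that every induced subcomplex $\D_m$ of this Scarf complex is acyclic whenever $\D$ itself is; Theorem~\ref{t:BPS} then guarantees that $\D$ supports a free resolution of $S/I$, and this resolution is minimal by the defining property of Scarf complexes.
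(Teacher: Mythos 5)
First, a caveat: the paper does not prove this theorem --- it is quoted from \cite{PV} and \cite{Ph} --- so there is no internal proof to compare against, and I can only assess your argument on its own terms. Your two forward directions are correct: the observation that the label of the full vertex set must collide with that of a proper subset (which, being a face of the boundary complex, would have to have a unique label) does rule out the boundary of a simplex, and acyclicity of a complex supporting a resolution follows from Theorem~\ref{t:BPS} applied to $\D_M$ with $M$ the lcm of all vertex labels.

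The gap is in the reverse direction of (1), which is the substance of the theorem: there you have a plan, not a proof. Two concrete problems. First, your description of the construction does not match how the Peeva--Velasco ideal actually works. In $J_\D$ (displayed as (\ref{e:j}) in this paper) one introduces a variable $x_\sigma$ for each \emph{face} $\sigma$ of $\D$ and sets $m_v=\prod_{v\notin\sigma}x_\sigma$; there are no ``dedicated variables for minimal nonfaces forcing collisions.'' Collisions are not forced by extra variables but arise automatically: one computes $\lcm(m_v \st v\in A)=\prod_{\sigma\in\D,\ A\not\subseteq\sigma}x_\sigma$, so every nonface $A$ receives the \emph{same} label $\prod_{\sigma\in\D}x_\sigma$, while two distinct faces $A,B\in\D$ are separated by the variables $x_A$ and $x_B$, and a face is separated from the nonfaces by $x_A$. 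The hypothesis that $\D$ is not the boundary of a simplex enters precisely to guarantee that there are at least two nonfaces (the nonfaces form an up-set, so a unique nonface would have to be the full vertex set). Second, you explicitly defer the verification (``the main obstacle is bookkeeping\ldots one has to verify''), which is to say the key step is not carried out. Similarly, in (2) your appeal to the ``standard fact'' that every $\D_m$ is acyclic when $\D$ is acyclic is an assertion, not an argument; it is true for $J_\D$, but only after the computation above, which shows that the induced subcomplex $\D_\XX$ on $\XX=\{j \st m_j\mid m\}$ is either the full simplex on a face of $\D$ (when $m$ is the label of a face, contractible) or all of $\D$ (when $m$ is the label of a nonface, acyclic by hypothesis). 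Without that computation neither the reverse implication of (1) nor that of (2) is established.
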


Since simplicial trees are acyclic, it immediately follows that 

\begin{corollary}\label{c:tree-scarf} Every simplicial tree 
is the Scarf complex of a monomial ideal $I$ and supports a minimal
resolution of $I$.
\end{corollary}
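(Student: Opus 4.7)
The plan is to apply the Peeva-Velasco theorem quoted above directly to a simplicial tree $\Delta$. That theorem requires two things of $\Delta$: part~(1) needs $\Delta$ not to be the boundary of a simplex in order to conclude that $\Delta$ is the Scarf complex of some monomial ideal $I$, and part~(2) needs $\Delta$ to be acyclic in order to upgrade this Scarf complex to an actual (and automatically minimal) resolution of $I$.

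Acyclicity is already provided by Theorem~\ref{t:collapsible}, so the only nontrivial step is verifying that a simplicial tree is never the boundary of a simplex. I would argue this by contradiction: suppose $\Delta$ is a tree on $r$ vertices that coincides with the boundary of an $(r-1)$-simplex. For $r \geq 3$, every facet of such a boundary is an $(r-2)$-subset of the vertex set, and each vertex lies in exactly $r-1 \geq 2$ facets. Hence no facet contains a free vertex. But every leaf of a simplicial complex must contain a free vertex, as observed immediately after the definition of leaf, so $\Delta$ itself would have no leaf---contradicting Definition~\ref{d:tree}. For $r = 2$, the boundary of the $1$-simplex is two disjoint vertices, which is disconnected and therefore not a tree either.

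Once this case is excluded, Peeva-Velasco part~(1) produces a monomial ideal $I$ whose Scarf complex is $\Delta$, and then part~(2) combined with the acyclicity from Theorem~\ref{t:collapsible} gives that $\Delta$ supports a minimal free resolution of $S/I$. There is no real obstacle: the substantive work was done in Theorem~\ref{t:collapsible}, and the only remaining ingredient is the brief observation above that boundaries of simplices fail to have leaves.
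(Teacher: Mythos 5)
Your proposal is correct and follows the paper's own route: apply the Peeva--Velasco theorem, with acyclicity supplied by Theorem~\ref{t:collapsible}. Your extra free-vertex argument that a tree is never the boundary of a simplex is a valid (and welcome) verification of the hypothesis of part~(1), which the paper leaves implicit; note it also follows at once from acyclicity, since the boundary of a simplex is a sphere and hence not acyclic.
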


An ideal (minimally) resolved by its Scarf complex is called a
\emph{Scarf ideal}. Given an eligible simplicial complex $\D$ with
vertices labeled $1,\ldots,n$, Peeva and Velasco in~\cite{PV} build a
Scarf ideal $J_\D$ using the following steps. Define a variable
$x_\sigma$ corresponding to each face $\sigma$ of $\D$. In the
polynomial ring generated by all these variables, define the ideal
$J_\D$ whose generators are enumerated by the vertices of $\D$, and
for every given vertex $v$ of $\D$, the corresponding monomial
generator is the product of all $x_\sigma$ where $v \notin \sigma$. In
short
\begin{eqnarray}\label{e:j}
 J_\D=(\prod_{\stackrel{\sigma \in \D}{v\notin \sigma}}^{} x_\sigma
 \st v=1,\ldots,n)=(m_1,\ldots,m_n).
\end{eqnarray}

The ideal $J_\D$ defined above is generated by rather large
monomials. In what follows we will demonstrate that one can shave off
some variables in each monomial to reduce the size of the generator
and still have a Scarf ideal of $\D$.

Suppose $\D$ is a simplicial complex with vertices labeled
$1,\ldots,n$. And for each vertex $v$ let $A_\D(v)$ be the set of
facets of $\D$ that do not contain $v$, and let $B_\D(v)$ be the set
of facets of $\D$ that do contain $v$. With variables labeled as
described above, define the ideal
$$J'_\D=(m'_1,\ldots,m'_n)$$ where  
\begin{eqnarray}\label{e:j'}
m'_v=\sqrt{\prod_{G \in B_\D(v)}x_{G\sm\{v\}}\ \prod_{F \in A_\D(v)} x_F
\prod_{\stackrel{\sigma\subset F}{\scriptscriptstyle |\sigma|=|F|-1}}x_\sigma} 
\mbox{\ \ \ \ \  for \ \ \ }
v=1,\ldots,n.
\end{eqnarray}

It is clear that the $m'_v \st m_v$ for all $v$.

\begin{proposition}\label{p:scarf} Let $\D$ be a simplicial complex which is 
not the boundary of an $n$-simplex and let $J'_\D$ be the ideal
described in (\ref{e:j'}).
\begin{enumerate} 
\item $\D$ is the Scarf complex for $J'_\D$. 
\item If $\D$ is acyclic (and in particular if $\D$ is a simplicial
  tree) then $J'_\D$ is a Scarf ideal.
\end{enumerate}
\end{proposition}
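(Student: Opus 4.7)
The plan is to compute $\lcm(m'_v:v\in\tau)$ for subsets $\tau$ of the vertex set of $\D$ and to use the resulting formula both to identify the Scarf complex of $J'_\D$ in Part~(1) and to verify the Bayer-Peeva-Sturmfels hypothesis in Part~(2). Let $\mathcal{F}$ denote the collection of faces of $\D$ that are either facets or codimension-one faces of some facet; these are precisely the $\sigma$ for which $x_\sigma$ appears in~(\ref{e:j'}) for at least one vertex $v$.

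The key computational claim I would establish first is that for every $\sigma\in\mathcal{F}$ and every vertex $v$, one has $x_\sigma\mid m'_v$ if and only if $v\notin\sigma$. For $\sigma$ a facet this is immediate from the middle product in~(\ref{e:j'}). For $\sigma$ a proper codimension-one face of a facet, assume $v\notin\sigma$ and fix any facet $F\supset\sigma$ with $|F|=|\sigma|+1$: if $v\notin F$ then $x_\sigma$ appears in $m'_v$ via the third product, while if $v\in F$ then $F=\sigma\cup\{v\}$ is itself a facet containing $v$, so $x_\sigma=x_{F\sm\{v\}}$ appears via the first product. The converse direction and the check that no other product can supply $x_\sigma$ when $v\in\sigma$ amount to a routine verification. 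The hard part will be keeping all three products in~(\ref{e:j'}) straight in this case analysis; the rest of the argument essentially unwinds from the resulting formula $\lcm(m'_v:v\in\tau)=\prod_{\sigma\in\mathcal{F},\,\tau\not\seq\sigma}x_\sigma$.

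With the lcm formula in hand I would finish Part~(1) by separating faces from non-faces. If $\tau\notin\D$, closure under subsets forces $\tau\not\seq\sigma$ for every $\sigma\in\D$, so $\lcm(m'_v:v\in\tau)$ equals the full product $\prod_{\sigma\in\mathcal{F}}x_\sigma$; since $\D$ is not the boundary of a simplex there are at least two such non-faces, and they are twinned. For distinct faces $\tau_1\neq\tau_2$ of $\D$, pick $v\in\tau_1\sm\tau_2$ and a facet $F\supseteq\tau_2$: either $v\notin F$, and then $x_F$ separates the lcms, or $v\in F$ and $\sigma:=F\sm\{v\}\in\mathcal{F}$ contains $\tau_2$ but not $\tau_1$, so $x_\sigma$ separates. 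Finally, every face $\tau\in\D$ sits in some facet $F\in\mathcal{F}$, so $x_F$ is absent from $\lcm(m'_v:v\in\tau)$, distinguishing faces from non-faces. This identifies $\D$ as the Scarf complex of $J'_\D$.

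For Part~(2), by Theorem~\ref{t:BPS} it suffices to show $\D_m$ is acyclic for every monomial $m$, since the Scarf complex automatically has distinct face labels. The lcm formula shows that a vertex $v$ lies in $\D_m$ precisely when $v\in\sigma$ for every $\sigma\in\mathcal{F}$ with $x_\sigma\nmid m$. Setting $I_m$ to be the intersection of these $\sigma$ (or the full vertex set when no such $\sigma$ exists) gives that $\D_m$ is the induced subcomplex on $I_m$. Because $I_m$ is an intersection of faces of $\D$, it is itself a face, so $\D_m$ is a simplex and hence acyclic; in the degenerate case $\D_m=\D$, acyclic by hypothesis. Theorem~\ref{t:BPS} then yields a resolution, automatically minimal by the Scarf property, so $J'_\D$ is a Scarf ideal.
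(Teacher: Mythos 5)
Your proposal is correct, and it takes a genuinely different route from the paper. The paper's proof works by comparison with the Peeva--Velasco ideal $J_\D$: it first checks non-redundancy of the $m'_v$ by hand, then proves a claim that $\lcm$-coincidences among the $m'_v$ are exactly the $\lcm$-coincidences among the $m_v$, and finally imports the facts that $\D$ is the Scarf complex of $J_\D$ and that the induced subcomplexes $\D_\XX$ are acyclic for the $J_\D$-labeling. You instead work directly with $J'_\D$ via the membership criterion $x_\sigma \mid m'_v \iff v\notin\sigma$ for $\sigma\in\mathcal{F}$, which yields the closed formula $\lcm(m'_v : v\in\tau)=\prod_{\sigma\in\mathcal{F},\,\tau\not\seq\sigma}x_\sigma$; from this you read off everything without reference to $J_\D$, and in Part~(2) you prove the stronger structural fact that every $\D_m$ is empty, a full simplex on a face of $\D$ (an intersection of faces being a face), or all of $\D$ --- so the only acyclicity ever used is that of $\D$ itself. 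Your version is more self-contained and more informative; the paper's version is shorter given the cited theorem, and its Claim~\ref{c:scarf} is reused in the subsequent proposition about the ideals $(h_1m'_1,\ldots,h_nm'_n)$, which your formula would also handle but which you do not address. Two small points to tidy up: (i) minimality of the generating set ($m'_i\nmid m'_j$ for $i\neq j$), which the paper checks first, should be stated explicitly --- it follows from your separating-variable argument applied to the singletons $\{i\},\{j\}$, since the separating variable divides $m'_i$ but not $m'_j$; (ii) the claim that there are ``at least two non-faces'' fails only when $\D$ is the full simplex, in which case there are no non-faces at all and the conclusion holds vacuously (similarly, for a facet $F$ containing every vertex, $x_F$ appears in no $m'_v$, but your criterion $x_F\mid m'_v\iff v\notin F$ remains true since its right-hand side is never satisfied).
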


  \begin{proof} We first show that $J'_\D$ has no redundant generators. 
    Suppose that we have $m'_i \ |\ m'_j$ and $i\neq j$. 

    Clearly $A_\D(i)\subseteq A_\D(j)$.  If $G \in B_\D(i)$, then
    $G\sm \{i\}$ can only be a maximal proper face of a facet in
    $A_\D(j)$; otherwise $H=\{j\}\cup G\sm \{i\} \in B_\D(j)$ and $i
    \notin H$, therefore $H\in A_\D(i)\subseteq A_\D(j)$ which is a
    contradiction since $j \in H$. In particular $G \in A_\D(j)$.

    We have shown that $$A_\D(i) \cup B_\D(i) \subseteq A_\D(j).$$
    This implies that all facets of $\D$ belong to $A_\D(j)$ and hence
    $j$ is not in any facet of $\D$; a contradiction.

     So we can label the vertices of $\D$ with the monomials
     $m'_1,\ldots, m'_n$, where the labeling is consistent with
     $m_1,\ldots,m_n$ as in (\ref{e:j}). Next we have to make sure
     that $\D$ is a Scarf complex of $J'_\D$.  For this purpose and
     what follows, the next claim will be useful.

      \bigskip

     \begin{claim}\label{c:scarf} Suppose $\sigma=\{u_1,\ldots,u_s\}$ and
     $\tau=\{v_1,\ldots,v_t\}$ are two faces of the simplex on
     $\{1,\ldots, n\}$. Then
     $$\lcm(m'_{u_1},\ldots,m'_{u_s})=\lcm(m'_{v_1},\ldots,m'_{v_t})
     \iff
     \lcm(m_{u_1},\ldots,m_{u_s})=\lcm(m_{v_1},\ldots,m_{v_t}).$$
     \end{claim}

     \begin{proof}[Proof of Claim~\ref{c:scarf}] For ease of argument we
     label the above $\lcm$s from the left to right with the symbols
     $M'_\sigma$, $M'_\tau$, $M_\sigma$ and $M_\tau$,
     respectively. Now suppose $M'_\sigma=M'_\tau$. Then it follows
     directly that $M_\sigma=M_\tau$. Conversely, suppose
     $M_\sigma=M_\tau$. Then, in particular, we have
      $$\bigcup_{i=1}^{s} A_\D(u_i)=\bigcup_{i=1}^{t} A_\D(v_i)$$ so 
      all the factors $x_F$ where $F$ is a facet of $\D$ are the same
      in both monomials $M'_\sigma$ and $M'_\tau$, as well as all
      $x_\sigma$ for maximal proper faces $\sigma$ of such $F$. So we
      only have to worry about terms of the form $x_{G\sm \{j\}}$ for
      a facet $G$ of $\D$ that contains $j$. Suppose $x_{G\sm
      \{{u_h\}}} \ |\ M'_\sigma$. If $x_G$ appears in $M'_\sigma$,
      we are done, as $G\sm \{{u_h\}}$ is a maximal proper face of $G$
      which appears as a label in $M'_\tau$ as well. If not, we
      conclude that $u_1,\ldots,u_s,v_1,\ldots,v_t \in G$, which means
      that $\sigma$ and $\tau$ are both faces of $\D$ with the same
      $\lcm$s; a contradiction as $\D$ is a Scarf complex of $J_\D$.
       \end{proof}

      The statement we just proved implies that $\D$ is the Scarf
      complex of $J'_\D$, as it is the Scarf complex of $J_\D$.
 
      We now show that if $\D$ is acyclic, then it supports a
      (minimal) resolution of $J'_\D$. So we need to show that for any
      set of vertices $u_1,\ldots,u_s$ of $\D$, the induced
      subcomplex on the vertex set $$\XX=\{j \st m'_j \st
      \lcm(m'_{u_1},\ldots,m'_{u_s})\}$$ is acyclic. Notice that
      $$\lcm(m'_j \st j \in \XX)= \lcm (m'_{u_1},\ldots,m'_{u_s})$$
      which by Claim~\ref{c:scarf} is equivalent to
      $$\lcm(m_j \st j \in \XX)= \lcm (m_{u_1},\ldots,m_{u_s})
      \mbox{ and }\XX=\{j \st m_j \st  \lcm(m_{u_1},\ldots,m_{u_s})\}.$$ So the
      induced subcomplex $\D_\XX$ is the same under both labelings (by
      $J_\D$ and $J'_\D$), and is therefore acyclic.
  \end{proof}

  We demonstrate all this via an example.

  \begin{example} For the complex $\D$ below, 
$\beta(J_\D)=(4,4,1)=\beta(J'_\D)=\mathbf{f}(\D)$.
 \[\includegraphics[width=1.5in, height=.8in]{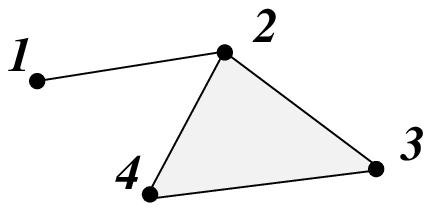}\]
$$J_\D=(\mathbf{x_2}x_3x_4\mathbf{x_{23}x_{24}x_{34}x_{234}}, 
\mathbf{x_1}x_3x_4\mathbf{x_{34}}, 
\mathbf{x_1x_2}x_4\mathbf{x_{12}x_{24}},
\mathbf{x_1x_2}x_3\mathbf{x_{12}x_{23}})$$
$$\downarrow$$
$$J'_\D=(x_2x_{23}x_{24}x_{34}x_{234}, 
x_1x_{34}, 
x_1x_2x_{12}x_{24},
x_1x_2x_{12}x_{23})$$

 \end{example}

Computational evidence has shown that many ideals ``in-between''
$J_\D$ and $J'_\D$ can be resolved by $\D$, though not all of them, as
indicated in Example~\ref{e:montreal}. Given a vertex $v$ of $\D$, we
know that
\begin{eqnarray}\label{e:j''}
m_v=\prod_{\stackrel{\sigma \in \D}{v\notin \sigma}}^{} x_\sigma=m_v'' m_v'
\end{eqnarray}
where by $m_v''$ we are denoting the product of the $x_\sigma$ that do
not appear in $m_v'$.

\begin{proposition}\label{p:scarf} Let $\D$ be a simplicial complex on $n$
vertices which is not the boundary of a simplex. For a vertex $v$ of
$\D$, let the monomials $m_v$, $m'_v$ and $m''_v$ be as defined in
(\ref{e:j}), (\ref{e:j'}) and (\ref{e:j''}), respectively, and suppose
$h_v$ is a monomial such that $h_v \st m''_v$.  Let $I$ be the
monomial ideal $$I=(h_1m'_1,\ldots,h_nm_n').$$ Then the Scarf complex
$\Gamma$ of $I$ has $n$ vertices and contains $\D$ as a subcomplex.
\end{proposition}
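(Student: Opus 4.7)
My plan is to prove two things: (i) the monomials $h_1m'_1,\ldots,h_nm'_n$ are pairwise non-dividing, so $I$ has exactly $n$ minimal generators and $\Gamma$ has $n$ vertices, and (ii) every face $\sigma\in\D$ lies in $\Gamma$. Write $M^*(\sigma):=\lcm_{v\in\sigma}(h_vm'_v)$ for brevity. The combinatorial lemma underlying both parts is that for any facet $F$ of $\D$ and any maximal proper subface $\eta$ of a facet of $\D$, the variable $x_F$ (resp.\ $x_\eta$) lies in the support of $m'_v$ precisely when $v\notin F$ (resp.\ $v\notin\eta$), and such variables never appear in any $h_v$ (since $h_v\mid m''_v$, and $m'_v,m''_v$ have disjoint supports). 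Consequently,
\[
x_F\mid M^*(\sigma)\iff\sigma\not\subseteq F,\qquad x_\eta\mid M^*(\sigma)\iff\sigma\not\subseteq\eta.
\]

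For (i), I would adapt the divisibility argument used in the earlier proposition. Suppose $h_im'_i\mid h_jm'_j$ with $i\neq j$. For each facet $F\in A_\D(i)$, the variable $x_F$ divides $m'_i$ and hence $h_jm'_j$; since $x_F$ appears in no $h$-factor, it must divide $m'_j$, giving $F\in A_\D(j)$. For each facet $G\in B_\D(i)$, the variable $x_{G\sm\{i\}}$ divides $m'_i$, and since $G\sm\{i\}$ is a maximal proper subface of the facet $G$, the lemma above forces $j\notin G\sm\{i\}$; since $j\neq i$, this gives $j\notin G$, i.e.\ $G\in A_\D(j)$. Combining, $A_\D(j)$ contains every facet of $\D$, contradicting $\{j\}\in\D$.

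For (ii), suppose $\sigma\in\D$ and $M^*(\sigma)=M^*(\tau)$ for some $\tau\neq\sigma$. The facet-variable equivalence forces $\sigma$ and $\tau$ to be contained in exactly the same facets of $\D$; in particular $\tau\in\D$. Pick a facet $F$ containing both, and for each vertex $w\in F$ apply the equivalence to the maximal proper subface $\eta_w:=F\sm\{w\}$: we obtain $w\notin\sigma\iff\sigma\subseteq\eta_w\iff\tau\subseteq\eta_w\iff w\notin\tau$. So $\sigma$ and $\tau$ have the same intersection with $F$, and since both are subsets of $F$ we conclude $\sigma=\tau$, a contradiction. The main obstacle is verifying the combinatorial lemma carefully, since it is there that the specific definition of $m'_v$ (which places all facet variables and all codimension-one-face variables into $m'_v$ rather than $m''_v$) does the work; once it is in place, both parts fall out from short case analyses.
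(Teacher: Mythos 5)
Your proof is correct. For the minimality of the generating set you follow essentially the paper's argument: the same two cases ($F\in A_\D(i)$ versus $G\in B_\D(i)$) lead to the same contradiction that every facet of $\D$ would lie in $A_\D(j)$; the only real difference is how the extra factor $h_j$ is ruled out --- the paper notes $h_jm'_j\mid m_j$ and exhibits a variable of $m'_i$ not dividing $m_j$, while you note that facet variables and codimension-one-face variables never occur in any $h_j$, so divisibility of $h_jm'_j$ already forces divisibility of $m'_j$. Both are valid. For the containment $\D\subseteq\Gamma$ you take a genuinely different route. The paper shows that a label coincidence with $u_i\in\sigma\sm\tau$ forces $m_{u_i}\mid\lcm(m_{v_1},\ldots,m_{v_t})$ and then appeals to the Peeva--Velasco fact that $\D$ is the Scarf complex of $J_\D$ to conclude $\tau\notin\D$ (and symmetrically $\sigma\notin\D$). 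You instead deduce directly from your combinatorial lemma that $\sigma$ and $\tau$ pass exactly the same facet tests $x_F\mid M^*(\cdot)$ and codimension-one-face tests $x_{\eta_w}\mid M^*(\cdot)$, hence lie in the same facets and agree inside a common facet, so $\sigma=\tau$. Your version is self-contained --- it does not use $\D=\mathrm{Scarf}(J_\D)$ as an input --- at the cost of verifying the lemma, which does hold: a facet $F$ cannot arise as $G\sm\{v\}$ or as a proper face of another facet by maximality; for a codimension-one face $\eta$ of a facet $H$ with $v\notin\eta$, either $v\notin H$ (so $x_\eta$ occurs in the $A_\D(v)$ part of $m'_v$) or $v\in H$ forces $\eta=H\sm\{v\}$ (the $B_\D(v)$ part); and since $v\in\rho$ implies $x_\rho\nmid m_v$, these variables never divide $m''_v$ and hence never divide any $h_v$.
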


   \begin{proof} First we have to show that $I$ has no 
  redundant generators.  Consider two monomials $h_im'_i$ and
  $h_jm'_j$ for some $i\neq j$. We have proved before that $m'_i \ndiv
  m'_j$, so there there are two possibilities:

  \begin{enumerate}

      \item There is $F\in A_\D(i)$ such that $F\notin A_\D(j)$
        (therefore $j \in F$), in which case $x_F \ndiv m_j$, and
        therefore $h_im'_i \ndiv h_jm'_j$; a contradiction.

      \item $A_\D(i) \subseteq A_\D(j)$, in which case there is $G\in
        B_\D(i)$ such $x_{G\sm\{i\}} \ndiv m'_j$, so $G \notin
        A_\D(j)$ and therefore $j \in G$ which implies that $j \in
        G\sm \{i\}$ so $x_{G\sm \{i\}} \ndiv m_j$, and therefore $h_im'_i \ndiv
        h_jm'_j$.
     \end{enumerate}
    
    This shows that $h_1m'_1,\ldots,h_nm_n'$ is a minimal generating
    set for $I$.

    Let $\Gamma$ be the Scarf complex of $I$ and suppose
    $\sigma=\{u_1,\ldots,u_s\}$ and $\tau=\{v_1,\ldots,v_t\}$ are two
    faces of the simplex on $\{1,\ldots, n\}$ with the same labels:
     $$\lcm(h_{u_1}m'_{u_1},\ldots,h_{u_s}m'_{u_s})=
    \lcm(h_{v_1}m'_{v_1},\ldots,h_{v_t}m'_{v_t}).$$ Suppose $u_i\notin
    \{v_1,\ldots, v_t\}$ for some $i$, then we have $h_{u_i}m'_{u_i}
    \st \lcm(h_{v_1}m'_{v_1},\ldots,h_{v_t}m'_{v_t})$. So all
    variables labeled by facets in $A_\D(u_i)$, their maximal proper
    faces, and by $G\sm \{u_i\}$ for $G \in B_\D(u_i)$ already appear
    in $\lcm(h_{v_1}m'_{v_1},\ldots,h_{v_t}m'_{v_t}) \st
    \lcm(m_{v_1},\ldots,m_{v_t})$. Therefore
    $$m_{u_i} \st \lcm(m_{v_1},\ldots,m_{v_t}) \then \lcm(m_{u_i},
    m_{v_1},\ldots,m_{v_t})= \lcm(m_{v_1},\ldots,m_{v_t}).$$ Since
    $\D$ is the Scarf complex for $J_\D$, this implies that $\tau
    \notin \D$. Similarly we have $\sigma \notin \D$. This proves that
    the Scarf complex $\Gamma$ of $I$ contains $\D$.

 \end{proof}

 Below is an example demonstrating that $\Gamma$ may not be equal to
 $\D$, even though they are quite often equal.

  \begin{example}\label{e:montreal} For the complex $\D$ below 
 \[\includegraphics[width=1.5in, height=1in]{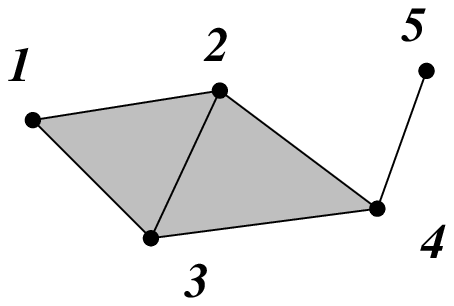}\]
we have $J_\D=(m_1,\ldots,m_5)$ and $J'_\D=(m'_1,\ldots,m'_5)$ where
$$\begin{array}{llll}
m_1=&x_2x_3 m'_1&m'_1=&x_{23}x_{24}x_{34}x_{234}x_{4}x_{5}x_{45}\\
m_2=&x_{1}x_{3}m'_2
&m'_2=&x_{13}x_{34}x_{4}x_{5}x_{45}\\
m_3=&x_{1}x_{2}m'_3
&m'_3=&x_{12}x_{24}x_{4}x_{5}x_{45}\\
m_4=&x_{1}x_{2}x_{3}m'_4
&m'_4=&x_{12}x_{13}x_{23}x_{123}x_{5}\\
m_5=&x_{1}x_{2}x_{3}m'_5
&m'_5=&x_{12}x_{13}x_{23}x_{123}x_{24}x_{34}x_{234}x_{4}\\
\end{array}
$$
In this case, $\beta(S/J_\D)=\beta(S/J'_\D)=\mathbf{f}(\D)=(5,6,2)$ as
expected (though $J_\D$ and $J'_\D$ have different graded Betti numbers).

Now consider the ideal $I=(m'_1,m'_2,m'_3,x_1m'_4,m'_5)$. We have 
 $\beta(S/I)=(5,7,3)$ and the (acyclic) Scarf
complex of $I$ is
 \[\includegraphics[width=1.5in, height=1in]{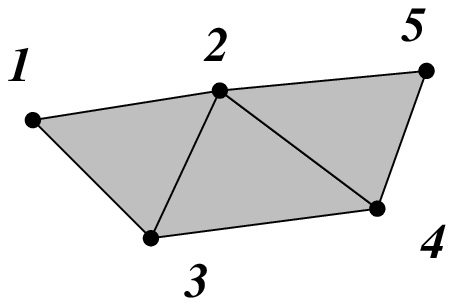}\]
which contains $\D$ as a subcomplex. 

It is worth noting that only very low degree choices of $h_v$ will
give strictly larger Scarf complexes. That is, given an acyclic
simplicial complex, one can find a whole class of Scarf ideals for it
by making appropriate (large enough) choices for the monomials $h_v$.
 \end{example}

There are many questions that naturally follow from this work, answers
to which would greatly contribute to understanding monomial
resolutions. For example, can one describe classes of monomial ideals
resolved by a given tree? What roles do localization, removal of facets
and other such operations that preserve forests play on Scarf ideals?
Can one describe classes of complexes (trees) resolving a given
monomial ideal? 



\begin{thebibliography}{1234}
  
\bibitem[BPS]{BPS} D.~Bayer, I.~Peeva, B.~Sturmfels, \emph{Monomial
  resolutions}, Math. Res. Lett. 5,  no. 1-2, 31–-46 (1998).

\bibitem[B]{B} A.~Bj\"orner, \emph{Topological methods}, Handbook of
  combinatorics, Vol. 1, 2, 1819–1872, Elsevier, Amsterdam (1995).


\bibitem[F]{F} S.~Faridi, \emph{The facet ideal of a simplicial
  complex,} Manuscripta Mathematica 109, 159--174 (2002).

\bibitem[PV]{PV} I.~Peeva and M.~Velasco, \emph{Frames and
  degenerations of monomial resolutions}, Trans. Amer. Math. Soc. 363,
  no. 4, 2029--2046 (2011).

\bibitem[Ph]{Ph} J.~Phan, \emph{Minimal monomial ideals and linear
  resolutions}, arXiv:math/0511032 (2005).

\bibitem[T]{T} D.~Taylor, \emph{Ideals generated by monomials in an
  $R$-sequence,} Thesis, University of Chicago (1966).

\bibitem[V]{V} M.~Velasco, \emph{Minimal free resolutions that are not
  supported by a CW-complex}, J. Algebra 319, no. 1, 102–-114 (2008).

  
\end{thebibliography}
\end{document}